\newtheorem{tw}{Theorem}[section]
\newtheorem{pro}[tw]{Proposition}
\newtheorem{cor}[tw]{Corollary}
\theoremstyle{definition}
\newtheorem{exa}[tw]{Example}
\newtheorem{rem}[tw]{Remark}
\begin{document}

\begin{center}
{\Large On the waiting time till some patterns occur in i.i.d. sequences}
\end{center}

\begin{center}
{\sc Urszula Ostaszewska, Krzysztof Zajkowski}
\footnote{The authors are supported by the Polish National Science Center, Grant no. DEC-2011/01/B/ST1/03838}\\
Institute of Mathematics, University of Bialystok \\ 
Akademicka 2, 15-267 Bialystok, Poland \\ 
uostasze@math.uwb.edu.pl \\ 
kryza@math.uwb.edu.pl 
\end{center}

\begin{abstract}
In this paper we present some general solution of the system of linear equations formed by Guibas and Odlyzko in Th.3.3 \cite{Gui}.
We derive probabilities for given patterns to be first to appear in random text and the expected waiting time till one of them is observed 
and also till  one of them occurs given it is known which pattern appears first.
\end{abstract}

{\it 2010 Mathematics Subject Classification:} 60E05

{\it Key words: probability generating functions, waiting time, conditional expectation value}

\section{Introduction}
It is a classical problem in probability theory to study occurrence of patterns in a random text formed by independent realizations of letters chosen from finite alphabet.  Feller in \cite{Fel} considered randomness and recurrent patterns connected with Bernoulli trials. 
Solov'ev  in \cite{Sol} found the formula on the expected waiting time for an appearance of one pattern. Penney in \cite{Pen} proposed a coin-flip game with a coin tossed repeatedly until one of patterns appeared, then this pattern wins. A formula for computing  the odds of winning for two competing pattern was discovered by Conway and described by Gardner \cite{Con}. A martingale approach to the study of occurrence of patterns in repeated experiments was presented by Li \cite{Li}.

Results dealing with the occurences of patterns have been applied in several areas of information theory including  source coding, code synchronization, randomness testing, etc. They are also important 
in molecular biology in DNA analysis and for gene recognition. 

Guibas and Odlyzko in Th.3.3 \cite{Gui} formed the system of linear equations which relates the generating function of probabilities that
given pattern occurs before others and the generating function for  tails distribution of probabilities until some pattern appears.
In this paper we present some general solution of this system (Th.\ref{mTh}). It has a general form but it allows us to obtain formulas on the probability
that a given pattern precedes the remaining ones and a new formula on the expected waiting time until some pattern occurs in a random text (Cor.\ref{wn1} and \ref{wn2}).
For complementary of our presentation we recall in a general context some  facts obtained in \cite{KZ} dealt with the probability-generating functions  but in this paper we focus our  
attention on the expected waiting time and we show how this general solution could be used to obtain conditional expected waiting times (Prop.\ref{stw}). 
We also present on some example how it could be used to prove in a simply way some known result deal with presented topic (Rem.\ref{uw1}).

\section{Waiting time on patterns}
Let $(\xi_n)$ be a sequence of i.i.d. random letters from a finite alphabet $\Omega$. 
For a given pattern (word) $A=(a_1,a_2,...,a_l)\in\Omega^l$ by $A_{(k)}$ and $A^{(k)}$ we will denote subpatterns  formed by first and last $k$ letters 
of $A$, respectively; $A_{(k)}=(a_1,a_2,...,a_k)$ and $A^{(k)}=(a_{l-k+1},a_{l-k+2},...,a_l)$. For two patterns $A$ and $B$  we will denote
$[A=B]=1$ if $A=B$ and $[A=B]=0$ if not. Now we define a correlation polynomial $w_A^B$ of $A$ and $B$ (of the lengths $l$ and $m$, respectively) as
$$
w_A^B(s)=\sum_{k=1}^{\min\{l,m\}}[A_{(k)}  =  B^{(k)}]Pr(A^{(l-k)})s^{l-k};
$$
if $l=\min\{l,m\}$ then in the above sum for the index $k=l$ we assume that $P(A^{(0)})=1$.
\begin{exa}
Let $\Omega=\{H,T\}$ and $(\xi_n)$ be a sequence of i.i.d. letters in $\Omega$  with the distribution 
$$
Pr(\xi_n=H)=p\quad{\rm and}\quad Pr(\xi_n=T)=q=1-p.
$$
Consider two patterns $A=THH$ and $B=THTH$. Then the correlation polynomials have the following forms:
$$
w_A^B(s)=ps,\;\; w_B^A(s)=0,\;\; w_A^A(s)=1\;\;{\rm and}\;\;w_B^B(s)=pqs^2+1. 
$$
\end{exa}
Let us note that the correlation polynomials up to some differences in their definition  can be used to investigate the appearances of patterns
not only in Bernoulli trials but also in Markov chains (see for instance \cite{Reg}).

Now we briefly recall results presented by Guibas and Odlyzko in \cite[sec.3]{Gui}.
Consider a set of $m$ patterns (words)  $A_i$ ($1\le i \le m$) of lengths $l_i$, respectively. 
We assume that the set of patterns is reduced
that is none of the patterns contains any other as a subpattern.

Let $\tau_i$ denote the stopping time until $A_i$ occurs and $\tau$ be the stopping time till some of considered patterns is observed, i.e. $$\tau=\min\{\tau_i:\;1\le i \le m\}.$$
 Let $p_{n}$ and $p^{A_i}_n$ be probabilities
$Pr(\tau=n)$ and $Pr(\tau=\tau_i=n)$, respectively. 
Let $g_\tau(s)$ denote $E(s^\tau)=\sum_{n=0}^\infty p_ns^n$ the probability generating function of random variable $\tau$   and $g_\tau^{A_i}$ be the generating functions $\sum_{n=0}^\infty p^{A_i}_ns^n$, $1\le i \le m$. 
Since $p_n=\sum_{i=1}^m p^{A_i}_n$ we have that $g_\tau=\sum_{i=1}^m g_\tau^{A_i}$. 
By $Q_\tau$ we denote the generating function for tails probabilities of $\tau$, i.e. $Q_\tau(s)=\sum_{n=0}^\infty q_ns^n$, where
$q_n=Pr(\tau>n)=\sum_{k>n}p_k$.

Let $B_n$ be the set of sequences such that any pattern $A_i$ does not appear in the string of the first $n$ letters of these sequences.
Notice that $Pr(B_n)=q_n$.
In the system of $m$ patterns if we add to each initial $n$-string in $B_n$ the word $A_i$ then we must check
if neither it nor other ones appear earlier. 
Since $Pr(B_n)=q_n$  we get the following system of equations
\begin{equation}
\label{req1}
q_nPr(A_i)=\sum_{j=1}^m\sum_{k=1}^{\min\{l_i,l_j\}}[A_{i(k)}=A_j^{(k)}]Pr(A_i^{(l_i-k)})p^{A_j}_{n+k},
\end{equation}
for each $1\le i \le m$, which is compatible with (3.2) in \cite{Gui}.
Notice now that 
\begin{equation}
\label{req2}
q_n=q_{n+1}+\sum_{j=0}^m p_{n+1}^{A_j}
\end{equation}
(see (3.1) in \cite{Gui}).
Multiplying (\ref{req1}) by $s^{n+l_i}$ and (\ref{req2}) by $s^n$ and summing from $n=0$ to infinity we obtain the following system of linear equations
\begin{equation}
\label{eq1}
\left\{
\begin{array}{ccl}
(1-s)Q_\tau(s)+\sum_{j=1}^m g_\tau^{A_j}(s) & = & 1\\
Pr(A_i)s^{l_i}Q_\tau(s)-\sum_{j=1}^m w_{A_i}^{A_j}(s)g_\tau^{A_j}(s)  & = & 0\quad (1\le i \le m).
\end{array}
\right.
\end{equation}
\begin{rem}
In our opinion the definition of  the correlation polynomial (second line from above page 195
\cite{Gui})  appearing in the system of linear equation formed in Th.3.3 \cite{Gui}  should be of the following form
$$
c_{GH}(z)=\sum_{r\in GH} z^{r-1}\frac{Pr(h_{r+1},...,h_{|H|})}{Pr(H)}=\sum_{r\in GH}\frac{z^{r-1}}{Pr(h_1,...,h_r)}=\sum_{r\in GH}\frac{z^{r-1}}{Pr(h_1)\cdot...\cdot Pr(h_r)}.
$$
Then substituting $z=\frac{1}{s}$ we get the equivalence of the system of linear equation in Th.3.3 \cite{Gui} and this one described in (\ref{eq1}).
\end{rem}

\begin{tw}
\label{mTh}
The solution of the system of linear equations (\ref{eq1}) has the following form
\begin{equation}
\label{pgf}
g_\tau^{A_i}(s)=\frac{\det \mathcal{B}^i(s)}{\sum_{j=1}^{m}\det \mathcal{B}^j(s)+(1-s)\det \mathcal{B}(s)}\quad(1\le i \le m)
\end{equation}
and 
\begin{equation}
\label{tgf}
Q_\tau(s)=\frac{\det \mathcal{B}(s)}{\sum_{j=1}^{m}\det \mathcal{B}^j(s)+(1-s)\det \mathcal{B}(s)},
\end{equation}
where $\mathcal{B}$ denotes a matrix formed by correlations polynomials $w_{A_i}^{A_j}$, i.e.
\begin{equation*}
\mathcal{B}(s)=
\begin{bmatrix}
w_{A_i}^{A_j}(s)
\end{bmatrix}
_{1\le i,j \le m}
\end{equation*}
and $\mathcal{B}^j(s)$ is the matrix arisen by replacing the $j$-th column of $\mathcal{B}(s)$ by  the column vector 
$[P(A_i)s^{l_i}]_{1\le i \le m}$.
\end{tw}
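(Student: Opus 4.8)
\emph{Sketch of the proof.} The plan is to view (\ref{eq1}) as a square system of linear equations in the $m+1$ unknowns $Q_\tau,g_\tau^{A_1},\dots,g_\tau^{A_m}$ and to solve it by Cramer's rule. Setting $x=(Q_\tau,g_\tau^{A_1},\dots,g_\tau^{A_m})^{\mathrm T}$, the system (\ref{eq1}) becomes $M(s)\,x=(1,0,\dots,0)^{\mathrm T}$, where $M(s)$ has first row $(1-s,1,\dots,1)$ and $i$-th row $(\Pr(A_i)s^{l_i},-w_{A_i}^{A_1}(s),\dots,-w_{A_i}^{A_m}(s))$, i.e.
\begin{equation*}
M(s)=\begin{pmatrix} 1-s & \mathbf 1^{\mathrm T}\\[1mm] \big[\Pr(A_i)s^{l_i}\big]_{1\le i\le m} & -\,\mathcal B(s)\end{pmatrix},\qquad \mathbf 1=(1,\dots,1)^{\mathrm T}\in\mathbb R^m .
\end{equation*}
Provided $\det M(s)\not\equiv0$, the unique solution is $Q_\tau=\det M_0/\det M$ and $g_\tau^{A_i}=\det M_i/\det M$, where $M_0$ is $M$ with its first column replaced by $(1,0,\dots,0)^{\mathrm T}$ and $M_i$ is $M$ with its $(i{+}1)$-st column so replaced. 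Everything then reduces to computing these three determinants and to verifying that $\det M\not\equiv0$.

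Expanding $\det M_0$ along its (new) first column, whose only nonzero entry is the leading $1$, leaves the minor $\det(-\mathcal B)$, so $\det M_0=(-1)^m\det\mathcal B$. Expanding $\det M_i$ along its $(i{+}1)$-st column, only one cofactor survives; removing the first row from the corresponding minor leaves the $m\times m$ matrix with columns $[\Pr(A_k)s^{l_k}]_{k}$ and the columns of $-\mathcal B$ other than the $i$-th, and extracting the $m-1$ minus signs and moving the column $[\Pr(A_k)s^{l_k}]_{k}$ into the $i$-th position produces $\det\mathcal B^i$ up to sign; collecting the sign contributions gives $\det M_i=(-1)^m\det\mathcal B^i$. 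Finally, expanding $\det M$ along its first row and recognizing the cofactors as the determinants just computed,
\begin{equation*}
\det M=(1-s)\det M_0+\sum_{j=1}^m\det M_j=(-1)^m\Big[(1-s)\det\mathcal B(s)+\sum_{j=1}^m\det\mathcal B^j(s)\Big].
\end{equation*}
The factor $(-1)^m$ cancels in every Cramer quotient, which gives exactly (\ref{pgf}) and (\ref{tgf}).

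To see that $\det M(s)\not\equiv0$ it suffices to evaluate at $s=0$. Since $l_i\ge1$, the entries $\Pr(A_i)s^{l_i}$ vanish there, and since $\{A_i\}$ is reduced no $A_i$ is a suffix of another $A_j$, so the constant term of $w_{A_i}^{A_j}$ equals $[i=j]$; hence $M(0)$ is block upper triangular with diagonal blocks $1$ and $-I_m$, and $\det M(0)=(-1)^m\neq0$. Equivalently, the common denominator of (\ref{pgf})--(\ref{tgf}) is a polynomial taking the value $1$ at $s=0$.

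The one genuinely delicate point should be the sign bookkeeping in $\det M_i$: one must check that the cofactor sign, the $(-1)^{m-1}$ from pulling out the minus signs of the $-\mathcal B$-columns, and the $(-1)^{i-1}$ from permuting the inserted column into place combine to the same $(-1)^m$ that appears in $\det M_0$ and in $\det M$, so that the sign disappears uniformly. An alternative that avoids these permutations is to verify directly that the proposed formulas solve (\ref{eq1}): the first equation holds because its left-hand side equals the common denominator divided by itself, and the $i$-th equation reduces to the identity $\sum_{j=1}^m w_{A_i}^{A_j}(s)\det\mathcal B^j(s)=\Pr(A_i)s^{l_i}\det\mathcal B(s)$, which follows on expanding each $\det\mathcal B^j$ along its modified column and applying the Laplace relation $\sum_j w_{A_i}^{A_j}\,\mathrm{cof}_{kj}(\mathcal B)=[i=k]\det\mathcal B$ (for $i\neq k$ the sum computes the determinant of a matrix with two equal rows, hence vanishes); uniqueness is then again supplied by $\det M\not\equiv0$.
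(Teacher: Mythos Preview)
Your argument is correct. Both you and the paper solve by Cramer's rule, but the computations are organised differently. The paper first eliminates $Q_\tau$ via the first equation of (\ref{eq1}), arriving at an $m\times m$ system with coefficient matrix $\mathcal A(s)=\big[\Pr(A_i)s^{l_i}+(1-s)w_{A_i}^{A_j}(s)\big]$; since every column of $\mathcal A$ is the fixed column $[\Pr(A_i)s^{l_i}]$ plus $(1-s)$ times a column of $\mathcal B$, multilinearity of the determinant (with any term containing the fixed column twice vanishing) yields $\det\mathcal A=(1-s)^m\det\mathcal B+(1-s)^{m-1}\sum_j\det\mathcal B^j$ and $\det\mathcal A^i=(1-s)^{m-1}\det\mathcal B^i$, and the common factor $(1-s)^{m-1}$ cancels. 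You instead keep the full $(m{+}1)\times(m{+}1)$ system and exploit the special right-hand side $(1,0,\dots,0)^{\mathrm T}$: each Cramer numerator $\det M_j$ is then exactly the $(0,j)$-cofactor of $M$, which both evaluates the numerators and, summed against the first row, rebuilds $\det M$. The paper's route is sign-free (no $(-1)^m$ or column permutations ever appear), while yours avoids the elimination step and treats $Q_\tau$ on the same footing as the $g_\tau^{A_i}$. Both check invertibility identically, via $\mathcal B(0)=I$ (equivalently $\det M(0)=(-1)^m$). Your closing alternative---verifying (\ref{pgf})--(\ref{tgf}) directly using the adjugate identity $\sum_j w_{A_i}^{A_j}\,\mathrm{cof}_{kj}(\mathcal B)=[i{=}k]\det\mathcal B$---is a third, verification-style variant not in the paper.
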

\begin{proof}
Leading $Q_\tau(s)$ out of the first equation of the system (\ref{eq1}) and substituting it into the remaining ones 
we obtain an equivalent system of linear equations of the form
$$
P(A_i)s^{l_i}=\sum_{j=1}^m g_\tau^{A_j}(s)[P(A_i)s^{l_i}+(1-s)w_{A_i}^{A_j}(s)]\quad (1\le i \le m).
$$
Let $\mathcal{A}$ denote the coefficient matrix of the above system, i.e. 
$$
\mathcal{A}(s) =
\begin{bmatrix}
P(A_i)s^{l_i}+(1-s)w_{A_i}^{A_j}(s)
\end{bmatrix}
_{1\le i,j \le m}.
$$ 
Notice that because $w_{A_i}^{A_i}(0)=1$ and $w_{A_i}^{A_j}(0)=0$ for $i\neq j$ then $\mathcal{A}(0)$ is the identity matrix.
Since  $\det\mathcal{A}(0)=1$ and $\det\mathcal{A}(s)$ is a polynomial, $\det\mathcal{A}(s)\neq 0$ on some neighborhood of zero.
It means that on this neighborhood there exist solution of the system.

Because the determinant 
of matrices $m\times m$ is a $m$-linear functional with respect to columns (equivalently to rows) then one can check that 
$$
\det \mathcal{A}(s)=(1-s)^m\det \mathcal{B}(s) +(1-s)^{m-1}\sum_{j=1}^m \det \mathcal{B}^j(s).
$$
If now similarly $\mathcal{A}^j(s)$ denotes the matrix formed by replacing the $j$-th column of $\mathcal{A}(s)$ by  the column vector $[P(A_i)s^{l_i}]_{1\le i \le m}$ then the determinant's
calculus gives that $\det \mathcal{A}^j(s)=(1-s)^{m-1}\det \mathcal{B}^j(s)$.  By the  Cramer's rule we obtain
$$
g_\tau^{A_i}(s)=\frac{\det\mathcal{A}^i(s)}{\det\mathcal{A}(s)}=\frac{\det \mathcal{B}^i(s)}{\sum_{j=1}^m \det \mathcal{B}^j(s)+(1-s)\det \mathcal{B}(s) }
$$
for $1\le i \le m$.

Substituting the functions $g_\tau^{A_i}$ into the first equation of the system (\ref{eq1}) one can calculate that
$$
Q_\tau(s)=\frac{\det \mathcal{B}(s)}{\sum_{j=1}^{m}\det \mathcal{B}^j(s)+(1-s)\det \mathcal{B}(s)}.
$$

\end{proof}

Notice that the probability-generating function $g_\tau^{A_i}(s)=\sum_{n=0}^\infty p_n^{A_i}s^n$ is well define on the interval $[-1,1]$ for sure
(it is an analytic function on $(-1,1)$). The right hand side of (\ref{pgf}) is a rational function equal to $g_\tau^{A_i}$ on some neighborhood of zero.
By the analytic extension we know that there exists the limit of the right hand side of (\ref{pgf}) by $s\to 1^-$ which is equal to $g_\tau^{A_i}(1)$. Thus we obtain the following 
\begin{cor}
\label{wn1}
The probability $Pr(\tau=\tau_i)$ that the pattern $A_i$ precedes all the remaining $m-1$ patterns is equal to $g_X^{A_i}(1)$, 
that is
\begin{equation}
\label{pro}
Pr(\tau=\tau_i)=\frac{\det \mathcal{B}^i(1)}{\sum_{j=1}^m \det \mathcal{B}^j(1)},
\end{equation}
where the right hand side of the above equality we understand as the limit of (\ref{pgf}) by $s\to 1^-$.
\end{cor}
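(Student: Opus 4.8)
The plan is to read $g_\tau^{A_i}(1)$ probabilistically and then invoke the analytic‑continuation remark made immediately before the statement. First I would check that $\tau<\infty$ almost surely: after discarding from $\Omega$ the letters of probability $0$ (which affects nothing), every letter, and hence every pattern, has positive probability; partitioning the index set into consecutive blocks of length $l_1$ and applying the second Borel--Cantelli lemma to the independent events ``the $k$-th block spells $A_1$'' shows that $A_1$ occurs almost surely, so $\tau\le\tau_1<\infty$ a.s. Since the set of patterns is reduced, no pattern is a suffix of another, so on $\{\tau<\infty\}$ a unique index realizes the minimum $\tau=\tau_i$; in particular $Pr(\tau=\tau_i)=Pr(\tau=\tau_i<\infty)$ for each $i$, the event $\{\tau=\tau_i=\infty\}$ having probability $0$.

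Next, since $g_\tau^{A_i}(s)=\sum_{n\ge0}p_n^{A_i}s^n$ has nonnegative coefficients with finite sum, Abel's theorem (equivalently, monotone convergence) gives
$$
g_\tau^{A_i}(1)=\lim_{s\to1^-}g_\tau^{A_i}(s)=\sum_{n\ge0}p_n^{A_i}=\sum_{n\ge0}Pr(\tau=\tau_i=n)=Pr(\tau=\tau_i<\infty)=Pr(\tau=\tau_i).
$$
On the other hand, by Theorem \ref{mTh} the rational function on the right of (\ref{pgf}) coincides with the analytic function $g_\tau^{A_i}$ on a neighbourhood of $0$; as $g_\tau^{A_i}$ is analytic on $(-1,1)$, the two agree on the connected component, containing $0$, of the set where the denominator of (\ref{pgf}) is nonzero, hence on a left neighbourhood of $1$, and as already observed this common value has a limit as $s\to1^-$ equal to $g_\tau^{A_i}(1)$. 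Combining the two chains yields $Pr(\tau=\tau_i)=\lim_{s\to1^-}\det\mathcal B^i(s)\big/\bigl(\sum_j\det\mathcal B^j(s)+(1-s)\det\mathcal B(s)\bigr)$, which is (\ref{pro}) understood, as the statement prescribes, as a limit; moreover, when $\sum_j\det\mathcal B^j(1)\neq0$ it is a genuine substitution at $s=1$, the factor $(1-s)$ killing the last summand.

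I expect the only real points requiring care to be the two just flagged: excluding a defective law for $\tau$ (handled by the blockwise Borel--Cantelli estimate) and the possible vanishing of the denominator of (\ref{pgf}) at $s=1$, which is precisely why the corollary phrases its right‑hand side as a limit rather than as a value. Nothing stronger, such as positivity of $\det\mathcal B(1)$ or of the $\det\mathcal B^j(1)$, needs to be proved here.
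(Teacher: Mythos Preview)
Your proposal is correct and follows essentially the same route as the paper: the paper's entire justification is the short paragraph preceding the corollary, which notes that $g_\tau^{A_i}$ is defined on $[-1,1]$, that the rational expression in (\ref{pgf}) agrees with it near $0$, and that by analytic continuation the limit as $s\to1^-$ of the rational expression equals $g_\tau^{A_i}(1)$. You reproduce this and, in addition, supply details the paper leaves implicit --- the Borel--Cantelli argument for $\tau<\infty$ a.s., the explicit appeal to Abel's theorem for $g_\tau^{A_i}(1)=Pr(\tau=\tau_i)$, and the uniqueness of the winning index via reducedness --- so your argument is a strict elaboration of the paper's.

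One small point: your ``hence on a left neighbourhood of $1$'' tacitly assumes the denominator of (\ref{pgf}) has no zero in $(0,1)$. You can close this by noting that any such zero would have to be removable (since $g_\tau^{A_i}$ stays bounded there while a genuine pole of the rational function would not), so the two functions in fact agree on all of $(0,1)$. The paper glosses over the same point.
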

An application of the above to the generalization of the Conway's formula one can find in \cite{KZ}.

Since $Q_\tau(1)$ is the expected value of $\tau$, we can formulate the another
\begin{cor}
\label{wn2}
The expected waiting time till one of $m$ patterns is observed is given by
\begin{equation}
\label{evQ}
E\tau=\frac{\det \mathcal{B}(1)}{\sum_{j=1}^{m}\det \mathcal{B}^j(1)},
\end{equation}
where the above right hand side is the limit of (\ref{tgf}) by $s\to 1^-$.
\end{cor}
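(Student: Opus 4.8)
The plan is to exploit the classical identity $E\tau=\sum_{n=0}^{\infty}Pr(\tau>n)=Q_\tau(1)$, valid for any nonnegative integer-valued random variable, and then to read off $Q_\tau(1)$ from the closed form (\ref{tgf}) of Theorem~\ref{mTh} by letting $s\to 1^-$, mirroring the argument that precedes Corollary~\ref{wn1}.

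Concretely, I would proceed in three steps. (i) Since $\tau$ takes values in $\{0,1,2,\dots\}$ we have $E\tau=\sum_{n=0}^{\infty}q_n$ with $q_n=Pr(\tau>n)\ge 0$; for a reduced finite family of patterns the first occurrence happens a.s.\ and, in fact, $q_n$ decays geometrically, so $E\tau<\infty$. As the coefficients $q_n$ are nonnegative, Abel's theorem (equivalently monotone convergence applied to $\sum_n q_ns^n$) gives $\lim_{s\to 1^-}Q_\tau(s)=\sum_{n=0}^{\infty}q_n=E\tau$. (ii) By Theorem~\ref{mTh} the rational function on the right of (\ref{tgf}) agrees with $Q_\tau$ on a neighborhood of $0$; since $Q_\tau$ is analytic on $(-1,1)$, the identity theorem forces the two to coincide throughout $(-1,1)$, hence the one-sided limit at $1$ of that rational function is again $E\tau$, in particular finite. (iii) The entries $\det\mathcal{B}$ and $\det\mathcal{B}^j$ are polynomials, so as $s\to 1^-$ the numerator of (\ref{tgf}) tends to $\det\mathcal{B}(1)$ while the summand $(1-s)\det\mathcal{B}(s)$ of the denominator tends to $0$; finiteness of the limit rules out $\sum_{j=1}^{m}\det\mathcal{B}^j(1)=0$, and the limit therefore equals $\det\mathcal{B}(1)/\sum_{j=1}^{m}\det\mathcal{B}^j(1)$, which is exactly (\ref{evQ}).

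The only genuinely delicate point — the very one already met in deriving (\ref{pro}) — is the legitimacy of interchanging the limit $s\to 1^-$ with the infinite summation defining $Q_\tau$ and with the analytic continuation; this rests entirely on $E\tau<\infty$, so the step that deserves an explicit word is the finiteness of $E\tau$ (which simultaneously guarantees that the denominator in (\ref{evQ}) does not vanish at $s=1$). Once that is granted, what remains is only continuity of polynomials and a direct substitution.
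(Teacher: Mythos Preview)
Your proposal is correct and follows exactly the paper's approach: the paper derives the corollary in a single sentence, ``Since $Q_\tau(1)$ is the expected value of $\tau$,'' and then states (\ref{evQ}) with the same caveat that the right-hand side is to be read as the limit of (\ref{tgf}) as $s\to 1^-$. Your write-up simply makes explicit the analytic justifications (Abel's theorem, analytic continuation, finiteness of $E\tau$) that the paper leaves implicit; the one small overreach is the claim that finiteness alone rules out $\sum_j\det\mathcal{B}^j(1)=0$ --- a simultaneous zero of numerator and denominator would not be excluded by that argument --- but the limit interpretation, which both you and the paper adopt, already covers this case.
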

Note that the above formula is also true for one pattern $A\in\Omega^l$ ($m=1$)  since
\begin{equation}\label{solov}
E\tau=\frac{w^A_A(1)}{Pr(A)}=\sum_{k=1}^l\frac{[A_{(k)}=A^{(k)}]}{Pr(A_{(k)})}
\end{equation}
(compare Solov'ev's result in \cite{Sol}). 

Now we can proceed to calculate the expected waiting time for the pattern $A_i$ knowing that it appears as a first, namely the conditional expectation 
of the random variable $\tau$ given the event $\{\tau=\tau_i\}$ that is we prove the following
\begin{pro}
\label{stw}
\begin{equation} 
\label{procond}
E({\tau}|\tau=\tau_i)= E\tau+\frac{1}{Pr(\tau=\tau_i)}\cdot \frac{d}{ds}\Big(\frac{\det\mathcal{B}^i}{\sum_{j=1}^{m}\det \mathcal{B}^j}\Big)(1).
\end{equation}
\end{pro}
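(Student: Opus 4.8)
The plan is to read $E(\tau\mid\tau=\tau_i)$ off the probability-generating function $g_\tau^{A_i}$ and then to differentiate its rational expression (\ref{pgf}) at $s=1$. Since $g_\tau^{A_i}(s)=\sum_{n=0}^\infty p_n^{A_i}s^n$ with $p_n^{A_i}=Pr(\tau=\tau_i=n)$, we have $g_\tau^{A_i}(1)=\sum_n p_n^{A_i}=Pr(\tau=\tau_i)$ (Corollary \ref{wn1}). Differentiating the power series term by term on $(-1,1)$ and letting $s\to1^-$, the monotone convergence theorem gives $\lim_{s\to1^-}(g_\tau^{A_i})'(s)=\sum_{n=0}^\infty np_n^{A_i}$. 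As $E(\tau\mid\tau=\tau_i)$ is by definition $\big(\sum_n np_n^{A_i}\big)/Pr(\tau=\tau_i)$, it equals $(g_\tau^{A_i})'(1)/g_\tau^{A_i}(1)$ (the derivative at $1$ understood as the left limit just computed), so everything reduces to evaluating the derivative of the right-hand side of (\ref{pgf}) at $1$.

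Before doing so I would repeat the analytic-continuation remark made just before Corollary \ref{wn1}: the right-hand side of (\ref{pgf}) is a rational function coinciding with $g_\tau^{A_i}$ near $0$, hence on all of $(-1,1)$, and its denominator $\sum_{j=1}^m\det\mathcal B^j(s)+(1-s)\det\mathcal B(s)$ equals $\sum_{j=1}^m\det\mathcal B^j(1)$ at $s=1$, which is nonzero because it is the common denominator appearing in (\ref{pro}). Hence this rational function is analytic at $s=1$, its ordinary derivative there exists, and it coincides with the left limit above; in particular $\sum_n np_n^{A_i}<\infty$.

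Now the computation. Write $N_i=\det\mathcal B^i$, $N=\det\mathcal B$, $S=\sum_{j=1}^m\det\mathcal B^j$ and $D=S+(1-s)N$, so that $g_\tau^{A_i}=N_i/D$. Using $D(1)=S(1)$ and $D'(1)=S'(1)-N(1)$, the quotient rule yields
$$
(g_\tau^{A_i})'(1)=\frac{N_i'(1)S(1)-N_i(1)S'(1)}{S(1)^2}+\frac{N_i(1)}{S(1)}\cdot\frac{N(1)}{S(1)}=\frac{d}{ds}\Big(\frac{N_i}{S}\Big)(1)+Pr(\tau=\tau_i)\cdot E\tau,
$$
where in the last step I used $N_i(1)/S(1)=Pr(\tau=\tau_i)$ from (\ref{pro}) and $N(1)/S(1)=E\tau$ from (\ref{evQ}). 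Dividing by $g_\tau^{A_i}(1)=Pr(\tau=\tau_i)$ gives exactly (\ref{procond}).

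The only genuinely delicate point is the passage to the endpoint $s=1$: it rests on the non-vanishing of $\sum_{j=1}^m\det\mathcal B^j(1)$, which makes the rational representative regular at $1$ and thereby both legitimizes the derivative $(g_\tau^{A_i})'(1)$ and identifies it with the partial expectation $\sum_n np_n^{A_i}$. Granting that, the remainder is the routine quotient-rule bookkeeping displayed above.
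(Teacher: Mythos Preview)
Your proof is correct and follows essentially the same route as the paper: express $E(\tau\mid\tau=\tau_i)$ as $(g_\tau^{A_i})'(1)/Pr(\tau=\tau_i)$, differentiate the rational form (\ref{pgf}) at $s=1$ via the quotient rule, and recognise the resulting two summands as $\frac{d}{ds}\big(\det\mathcal B^i/\sum_j\det\mathcal B^j\big)(1)$ and $Pr(\tau=\tau_i)\,E\tau$ using Corollaries \ref{wn1} and \ref{wn2}. Your added care about analyticity at $s=1$ and the monotone-convergence justification for $(g_\tau^{A_i})'(1)=\sum_n np_n^{A_i}$ is a welcome refinement that the paper leaves implicit.
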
 
\begin{proof}
Observe that the conditional expectation $E({\tau}|\tau=\tau_i)$ can be expressed in terms of the function $g_\tau^{A_i}$ as follows
\begin{eqnarray}
\label{Contau}
E({\tau}|\tau=\tau_i) & = & \sum_{n=1}^\infty nPr(\tau=n|\tau=\tau_i)=\frac{1}{Pr(\tau=\tau_i)}\sum_{n=1}^\infty nPr(\tau=\tau_i=n) \nonumber\\
\; & = & \frac{1}{Pr(\tau=\tau_i)}\frac{d}{ds}g_\tau^{A_i}(1).
\end{eqnarray}
Differentiating the function $g_\tau^{A_i}$ given by formula (\ref{pgf})  and taking the value at 1 we get
\begin{eqnarray}
\frac{d}{ds}g_\tau^{A_i} (1)&=& \frac{\frac{d}{ds}\det\mathcal{B}^i(1) \sum_{j=1}^{m}\det \mathcal{B}^j (1)- \det\mathcal{B}^i(1)\sum_{j=1}^{m}\frac{d}{ds}\det \mathcal{B}^j (1)}{[\sum_{j=1}^{m}\det \mathcal{B}^j (1)]^2} \nonumber \\
&&
+\frac{\det\mathcal{B}^i(1)}{\sum_{j=1}^{m}\det \mathcal{B}^j (1)} \cdot \frac{\det\mathcal{B}(1)}{\sum_{j=1}^{m}\det \mathcal{B}^j(1)}. \nonumber
\end{eqnarray}
Notice that the first summand can be expressed as $\frac{d}{ds}\Big(\frac{\det\mathcal{B}^i}{\sum_{j=1}^{m}\det \mathcal{B}^j}\Big)(1)$
and the second one, by Corollaries \ref{wn1} and \ref{wn2}, is equal to $Pr(\tau=\tau_i)E\tau$.
Thus, by (\ref{Contau}), we get the formula (\ref{procond}) on $E({\tau}|\tau=\tau_i)$.
\end{proof}

Before we present some examples we would like to show  applications of our results to obtain some known fact contained in \cite{Li}.
\begin{rem}\label{uw1}
Define a number $A_j \ast A_i$ as 
$$
\frac{w_{A_i}^{A_j}(1)}{Pr(A_i)}
=\sum_{k=1}^{\min\{l_i,l_j\}}\frac{[A_{i(k)}=A_j^{(k)}]}{Pr(A_{i(k)})}.
$$
Let us emphasizes that the above coincides with the notation (2.3) in \cite{Li}. 
Consider now a matrix 
$$
\mathcal{C}=
\begin{bmatrix}
A_j \ast A_i
\end{bmatrix}
_{1\le i,j \le m}.
$$
Observe that $\det\mathcal{B}(1)=\prod_{i=1}^m Pr(A_i)\det\mathcal{C}$ and $\det\mathcal{B}^j(1)=\prod_{i=1}^m Pr(A_i)\det\mathcal{C}^j$, where $\mathcal{C}^j$
is the matrix formed by replacing the $j$-th column of $\mathcal{C}$ by  the column vector 
$\begin{bmatrix} 1\end{bmatrix}_{1\le i \le m}$.  In this way we can rewrite Corollaries  \ref{wn1} and  \ref{wn2} in terms of matrix $\mathcal{C}$ as follows 

$$
Pr(\tau=\tau_i)=\frac{\det \mathcal{C}^i}{\sum_{j=1}^m \det \mathcal{C}^j}\quad {\rm and}\quad E\tau=\frac{\det \mathcal{C}}{\sum_{j=1}^m \det \mathcal{C}^j}.
$$
By the martingale arguments Li proved  in  \cite{Li} (see Theorem 3.1) that for every $i=1,2, \ldots, m$ we have
$$
E\tau= \sum_{j=1}^{m} Pr(\tau=\tau_j)( A_j \ast A_i)
$$
which in our notation is equivalent to the following formula
$$
\det \mathcal{C} = \sum_{j=1}^{m}  (A_j \ast A_i) \det \mathcal{C}^j, \quad (1\leq i \leq m).
$$
Now we independently prove  the above determinant's identity. 
\begin{proof}
Let $\mathcal{D}$ be extended matrix $\mathcal{C}$ on an initial row and  column as follows
$$
\mathcal{D}=
\left[\begin{array}{c|c}
a_{00}& \begin{array}{cccc} a_{01} & a_{02} & \ldots & a_{0m} \end{array} \\
\hline
\begin{array}{c} a_{10} \\ a_{20} \\ \vdots \\ a_{m0} \end{array}&\mathcal{C}

\end{array}\right],
$$
where $a_{00}=1$, $(a_{01}, \ldots, a_{0m})$ is the $i$-th row of the matrix $\mathcal{C}$, i.e. $a_{0j}=A_j \ast A_i$, $1\le j \le m$,  and $a_{k0}=1$ for $k\neq i$ and $a_{i0}=0$. 
The Laplace expansion along the zero column yields 
$$
\det \mathcal{D} = \det \mathcal{C}
$$
whereas taking the Laplace expansion along the $i$-th row we obtain
$$
\det \mathcal{D}  = \sum_{j=1}^{m}(A_j \ast A_i) (-1)^{i+j}\det\mathcal{D}_{ij}.
$$
Notice that for every matrix $\mathcal{D}_{ij}$
permuting the zero column and zero row in the place of removed ones, by the determinant's properties, we get
$$
\det \mathcal{D}_{ij}=(-1)^{i+j-2}\det \mathcal{C}^j.
$$ 
Thus 
$$
\det \mathcal{D}  = \sum_{j=1}^{m}(A_j \ast A_i)\det \mathcal{C}^j
$$
and this completes the proof.
\end{proof}
\end{rem}

\begin{exa}

Let $\Omega=\{A,C,G,T\}$ and $(\xi_n)$ be a sequence of i.i.d. letters in $\Omega$  with the probabilities 
$$
Pr(\xi_n=A)=p_a, \quad Pr(\xi_n=C)=p_c, \quad Pr(\xi_n=G)=p_g\quad {\rm and} \quad P(\xi_n=T)=p_t.
$$
Consider the set of three  patterns: $A_1=ACG, A_2=ATG$ and $A_3=AG$. Observe that $w_{A_i}^{A_j}(s)=0$ if $i\neq j $ and $w_{A_i}^{A_i}(s)=1$.
So in this case the matrix $\mathcal{B}(s)= [w_{A_i}^{A_j}(s)]_{1\leq i,j \leq 3}$ is the identity matrix. For the matrices $\mathcal{B}^i(s)$ we obtain
$$
\det \mathcal{B}^i(s)= P(A_i)s^{l_i}.
$$
Since $Pr(\tau=\tau_i)= \frac{det \mathcal{B}^i(1)}{\sum_{j=1}^{m}det \mathcal{B}^j(1)}$ the probabilities that the $i$-th pattern occurs as a first are given by
$$
Pr(\tau=\tau_1)=\frac{p_c}{p_c+p_t+1}, \ \ \ \ \ 
Pr(\tau=\tau_2)=\frac{p_t}{p_c+p_t+1},  \ \ \ \ \
Pr(\tau=\tau_3)=\frac{1}{p_c+p_t+1}. 
$$
Applying now Colorary \ref{wn2} we obtain
$$
E\tau = \frac{1}{p_ap_g(p_c+p_t+1)}.
$$
By Proposition \ref{stw} we can calculate 
\begin{eqnarray}
E(\tau|\tau=\tau_1)&=& \frac{p_ap_g+1}{p_ap_g(p_c+p_t+1)}, \nonumber \\
E(\tau|\tau=\tau_2)&=& \frac{p_ap_g+1}{p_ap_g(p_c+p_t+1)},  \nonumber \\
E(\tau|\tau=\tau_3)&=& \frac{1-p_ap_g(p_c+p_t)}{p_ap_g(p_c+p_t+1)}. \nonumber
\end{eqnarray}
Note that for each pattern by the Solov'ev's formula (\ref{solov}) we have 
$$
E\tau_1=\frac{1}{p_ap_cp_g}, \ \ \ \ \ 
E\tau_2= \frac{1}{p_ap_gp_t}, \ \ \ \ \  
E\tau_3= \frac{1}{p_ap_g}. 
$$
Observe that for different $p_a,p_c,p_g,p_t$ the values $E\tau_1,E\tau_2,E\tau_3$  are mostly different. But in this example we obtained
that $E(\tau|\tau=\tau_1)=E(\tau|\tau=\tau_2)$ for any $p_a,p_c,p_g,p_t$.
\end{exa}

\begin{rem}
If $\mathcal{B}(s)$ is the identity matrix then applying Proposition \ref{stw} one can calculate that
\begin{eqnarray}
E(\tau|\tau=\tau_i)&=& E\tau + \frac{l_i Pr(A_i)\sum_{k=1}^{m}Pr(A_k)- Pr(A_i)\sum_{k=1}^{m} l_k Pr(A_k)}{Pr(A_i)\sum_{k=1}^{m}Pr(A_k)}  \nonumber \\
     \;               &=& E\tau +  l_i - \frac{\sum_{k=1}^{m}l_k Pr(A_k)}{\sum_{k=1}^{m} Pr(A_k)}.                                     \nonumber 
\end{eqnarray} 
It means that if additionally patterns $A_i$ and $A_j$ are of the same length then $E(\tau|\tau=\tau_i)=E(\tau|\tau=\tau_j)$. 
For this reason in the above example $E(\tau|\tau=\tau_1)=E(\tau|\tau=\tau_2)$.
\end{rem}

\begin{exa}
Let now $\Omega=\{H,T\}$ and $Pr(\xi_n=H)=p$ and $Pr(\xi_n=T)=q=1-p$.
Consider three patterns: $A_1=THH, A_2=HTH$ and $A_3=HHT$.
In this case
$$
\mathcal{B}(s)= \begin{pmatrix} w_{A_i}^{A_j}(s)\end{pmatrix}_{1\leq i,j \leq 3}= 
\begin{pmatrix}
	1        &  ps   & p^2s^2 \\
	pqs^2 &   pqs^2+ 1      & ps \\
	pqs^2+qs        &    pqs^2  & 1
\end{pmatrix}
$$
and
\begin{eqnarray*}
\det\mathcal{B}(s) & = &-p^3q^2 s^5-2p^2qs^3+pqs^2+1,\\
\det\mathcal{B}^1(s) & = & p^2qs^3(-p^2qs^3+pqs^2-ps+1),\\
\det\mathcal{B}^2(s) & = & p^2qs^3(1-ps),\\
\det\mathcal{B}^3(s) & = & p^2qs^3(-pq^2s^3-qs+1).
\end{eqnarray*}
By Corollary \ref{wn1} we get
$$
Pr(\tau=\tau_1)=\frac{q(1+pq)}{1+q}, \ \ 
Pr(\tau=\tau_2)=\frac{q}{1+q}, \ \
Pr(\tau=\tau_3)=\frac{p(1-q^2)}{1+q}.  
$$
Taking into account that $q=1-p$, by virtue of Corollary \ref{wn2} we obtain the formula on the expected waiting time of $\tau$ 
\begin{eqnarray}
E\tau &=&\frac{-p^5+2p^4+p^3-3p^2+p+1}{p^2(1-p)(2-p)}. \nonumber 
\end{eqnarray}
By Proposition \ref{stw} one can calculate that
\begin{eqnarray}
E(\tau|\tau=\tau_1)&=& E\tau+ \frac{-p^5+9p^4-12p^3-6p^2+12p-3 }{(1-p)(1+p-p^2)(2-p)}, \nonumber \\
E(\tau|\tau=\tau_2)&=& E\tau+ \frac{p^3-10p^2+11p-3}{(1-p)(2-p)}, \nonumber \\
E(\tau|\tau=\tau_3)&=& E\tau+ \frac{-p^4+ 7p^3+3p^2-10p+3}{p^2(2-p)}. \nonumber
\end{eqnarray}
In the symmetric case $p=q=\frac{1}{2}$ the above quantities take values
$$
Pr(\tau=\tau_1)=\frac{5}{12},\quad 
Pr(\tau=\tau_2)=\frac{1}{3},\quad
Pr(\tau=\tau_3)=\frac{1}{4},  
$$
$$
E\tau=\frac{31}{6} 
$$
and
$$
E(\tau|\tau=\tau_1)=\frac{86}{15},\quad E(\tau|\tau=\tau_2)=\frac{16}{3},\quad E(\tau|\tau=\tau_3)=4.
$$

\end{exa}

\end{document}